\newcommand{\leqnomode}{\tagsleft@true}
\newcommand{\reqnomode}{\tagsleft@false}
\declaretheorem{theorem}
\declaretheorem[sibling=theorem]{lemma}
\declaretheorem[sibling=theorem]{Corollary}
\newcommand{\sset}[1]{\left\{#1\right\}}
\title{List-three-coloring $ P_t $-free graphs with no induced  1-subdivision of $ K_{1,s} $}
\author[1]{Maria Chudnovsky\footnote{Partially supported
		by NSF grant DMS-1763817 and U.S. Army Research Office grant W911NF-16-1-0404.}}
\author[1]{Sophie Spirkl\footnote{This material is based upon work supported by the National Science Foundation under Award No. DMS-1802201}}
\author[2]{Mingxian Zhong}
\affil[1]{\emph{\small Princeton University, Princeton, NJ 08544}}
\affil[2]{\emph{\small
Lehman College and the Graduate Center, City University of New York, NY 10468}}
\date{\today}
\begin{document}
\maketitle

\begin{abstract} 
Let $s$ and $t$ be positive integers. We use $P_t$ to denote the path with $t$ vertices and $K_{1,s}$ to denote the complete bipartite graph with parts of size $1$ and $s$ respectively. The one-subdivision of $K_{1,s}$ is obtained by replacing every edge $\{u,v\}$ of $K_{1,s}$ by two edges $\{u,w\}$ and $\{v,w\}$ with a new vertex $w$. In this paper, we give a polynomial-time algorithm for the list-three-coloring problem restricted to the class of $P_t$-free graph with no induced 1-subdivision of $K_{1,s}$.

\end{abstract}

	\section{Introduction}

All graphs in this paper are finite and simple.
We use $[k]$ to denote the set $\sset{1, \dots, k}$. Let $G$ be a graph. A
\emph{$k$-coloring} of $G$ is a function
$f:V(G) \rightarrow [k]$ such that for every edge 
$uv \in E(G)$,
$f(u) \neq f(v)$, and $G$ is \emph{$k$-colorable} if $G$ has a
$k$-coloring. The \textsc{$k$-coloring problem} is the problem of
deciding, given a graph $G$, if $G$ is $k$-colorable. This
problem is well-known to be $NP$-hard for all $k \geq 3$.

A function $L: V(G) \rightarrow 2^{[k]}$ that assigns a subset of
$[k]$ to each vertex of a graph $G$ is a \emph{$k$-list assignment}
for $G$. For a $k$-list assignment $L$, a function
$f: V(G) \rightarrow [k]$ is a 
\emph{coloring of $(G,L)$} if $f$ is a
$k$-coloring of $G$ and $f(v) \in L(v)$ for all $v \in V(G)$. 
We say that a graph $G$ is  \emph{$L$-colorable},
and that the pair $(G,L)$ is {\em colorable},  if $(G,L)$ has a coloring.
The \textsc{list-$k$ coloring problem} is the problem of deciding, given a
graph $G$ and a $k$-list assignment $L$, if $(G,L)$ is 
colorable. Since this generalizes the $k$-coloring problem, it is
also $NP$-hard for all $k \geq 3$.


 We denote by $P_t$ the path with $t$ vertices and we use $K_{1,s}$ to denote the complete bipartite graph with parts of size $1$ and $s$ respectively. The one-subdivision of $K_{1,s}$ is obtained by replacing every edge $\{u,v\}$ of $K_{1,s}$ by two edges $\{u,w\}$ and $\{v,w\}$ with a new vertex $w$. For a set $\mathcal{H}$ of graphs, a graph $G$ is $\mathcal{H}$-free if no element of $ \mathcal{H} $ is an induced subgraph of $G$. If $\mathcal{H}=\{H\}$, we say that $G$ is $H$-free.  In this paper, we use the terms
``polynomial time'' and ``polynomial size'' to mean ``polynomial in
$|V(G)|$'', where $G$ is the input graph.
Since the  \textsc{$k$-coloring problem} and the \textsc{list-$k$ coloring problem} are $NP$-hard for $k \geq 3$, their
restrictions to $H$-free graphs, for various $H$,  have been extensively 
studied. In particular, the following is known:

\begin{theorem}[\cite{gps}] Let $H$ be a (fixed) graph, and let $k>2$. If
	the \textsc{$k$-coloring problem} can be solved in polynomial time when restricted to the class of $H$-free graphs, then every connected component of $H$ is a path.
\end{theorem}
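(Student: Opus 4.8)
The plan is to prove the contrapositive: if some connected component of $H$ is not a path, then the $k$-coloring problem restricted to $H$-free graphs is $NP$-hard (and hence, unless $P=NP$, not solvable in polynomial time). Fix such a component $C$ of $H$. Since any graph containing a cycle contains an induced cycle (a shortest cycle has no chord), there are two cases: either $C$ contains an induced cycle $C_\ell$ for some $\ell\ge 3$, or $C$ is a tree. In the latter case, because $C$ is not a path it has a vertex $v$ of degree at least $3$; choosing three neighbors $a,b,c$ of $v$ and noting that no two of them are adjacent (otherwise $C$ would contain a triangle), we see that $\{v,a,b,c\}$ induces a copy of $K_{1,3}$ in $C$. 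In both cases $H$ has an induced subgraph $H_0$ (namely $C_\ell$ or $K_{1,3}$) lying inside the single component $C$, and since an induced copy of $H$ in any graph $G$ would contain an induced copy of $H_0$, every $H_0$-free graph is also $H$-free.

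It therefore suffices to exhibit, for each possibility for $H_0$, a class of $H_0$-free graphs on which $k$-coloring is $NP$-hard. If $H_0=C_\ell$, take the class of graphs of girth greater than $\ell$; these have no cycle of length $\ell$, hence no induced $C_\ell$, and it is known that for every fixed $g$ and every $k\ge 3$ the $k$-coloring problem is $NP$-hard for graphs of girth at least $g$ (Emden-Weinert, Hougardy and Kreuter; see also Kamiński and Lozin). If $H_0=K_{1,3}$, take the class of line graphs; these are claw-free, and a proper $k$-coloring of the line graph of a graph $F$ is exactly a proper $k$-edge-coloring of $F$, so $k$-coloring on line graphs is equivalent to deciding whether $F$ has chromatic index at most $k$, which is $NP$-hard for every $k\ge 3$ (Holyer for $k=3$; Leven and Galil for $k\ge 4$). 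In either case we obtain an $NP$-hard subproblem of $k$-coloring on $H$-free graphs, which completes the proof.

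The graph-theoretic content above is elementary; the substance of the theorem lies in the two external hardness results it invokes. The more delicate of these is the hardness of $k$-coloring graphs of large girth: unlike the claw case, it does not reduce directly to a classical problem, and its proof requires a gadget construction that simultaneously enforces the desired chromatic behavior and destroys all short cycles. The claw case, by contrast, is an immediate consequence of the $NP$-hardness of computing the chromatic index. Once these are in hand, the only remaining step is the reduction of ``some component of $H$ is not a path'' to ``$H$ has an induced cycle or an induced claw,'' which is the short argument given above.
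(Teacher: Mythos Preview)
The paper does not give its own proof of this theorem; it is quoted from \cite{gps} as background. Your argument is correct and is precisely the standard one found in that reference: reduce ``some component of $H$ is not a path'' to ``$H$ contains an induced cycle or an induced claw,'' then invoke $NP$-hardness of $k$-coloring on graphs of arbitrarily large girth for the cycle case and $NP$-hardness of $k$-edge-coloring (via line graphs) for the claw case. One remark: as stated in the paper the theorem tacitly assumes $P\neq NP$, and you rightly make this explicit when passing from ``$NP$-hard'' to ``not solvable in polynomial time.''
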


Thus if we assume that $H$ is connected, then the question of determining the 
complexity of $k$-coloring $H$-free graph is reduced to 
studying the complexity of coloring graphs with 
certain induced paths excluded,
and a significant body of work has been produced on this topic.
Below we list a few such results.

\begin{theorem}[\cite{c1}] \label{3colP7}
	The \textsc{3-coloring problem} can be
	solved in polynomial time for the class of $P_7$-free graphs.
\end{theorem}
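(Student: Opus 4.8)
\emph{Proof idea.}
The plan is to deduce this from the (formally stronger) statement that \emph{list}-$3$-colouring is solvable in polynomial time on $P_7$-free graphs: indeed $3$-colouring is the special case of list-$3$-colouring in which every list equals $[3]$. So fix an instance $(G,L)$ with $G$ a $P_7$-free graph and every list contained in $[3]$. I would first run a standard cleaning/propagation phase: whenever a vertex $v$ has $|L(v)|=1$, say $L(v)=\{c\}$, delete $v$ and delete $c$ from $L(u)$ for every neighbour $u$ of $v$; return ``not colourable'' if some list becomes empty (or if $G$ contains $K_4$, or three mutually adjacent vertices forced to a common colour). After this we may assume every list has size $2$ or $3$, and, since the problem splits over connected components, that $G$ is connected. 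The pivotal elementary observation is that \emph{if every list has size at most $2$}, then list-$3$-colouring reduces to an instance of $2$-SAT (one Boolean variable per vertex recording which of its two colours is used, with a $2$-clause per edge), and is therefore solvable in polynomial time. Hence the whole task is: by a branching procedure whose recursion tree has polynomial size, reduce to instances in which no list has size $3$.

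The engine for the branching is domination. A connected $P_7$-free graph has a dominating set $D$ inducing a $P_5$-free subgraph (by the line of results of Bacsó--Tuza and Camby--Schaudt on connected domination in $P_t$-free graphs), and a connected $P_5$-free graph has a dominating clique or a dominating induced $P_3$. Iterating these facts exhibits in $G$ a dominating structure of bounded complexity, assembled from a constant number of cliques and small stable sets. I would then branch over the possible colour \emph{patterns} of this structure: a clique has at most $3$ vertices in a $3$-colourable graph, so it contributes $O(1)$ colourings; for a stable part, instead of enumerating its exponentially many colourings I would only guess the (size $\le 2$) subset of $[3]$ from which its colours are drawn --- again $O(1)$ possibilities --- and leave the actual choice to the final $2$-SAT phase. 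In any branch, every vertex that is complete to the relevant part of the dominating structure loses a colour from its list, which then has size at most $2$.

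The main obstacle --- and where essentially all of the real work in the $P_7$-free case goes --- is that one round of this is not enough: a vertex adjacent to only part of the dominating structure, or to a part whose colours were merely ``patterned'' rather than fixed, may still carry a size-$3$ list. These stubborn vertices must be pinned down using $P_7$-freeness: one argues they occupy a restricted position relative to $D$ (inducing a simpler graph, or attached to $D$ in only boundedly many ``types''), so that a further bounded round of branching, or a recursion on a strictly simpler instance, eliminates the remaining size-$3$ lists; and all of this must be organised so that the number of leaves of the recursion tree, and the work at each leaf, stay polynomial. The structural analysis of how a $P_7$-free graph can attach to a dominating subgraph, the case distinctions needed to tame the stubborn vertices, and a potential function certifying polynomial running time are the technical heart of the proof, and are exactly what makes $P_7$ substantially harder than the previously settled $P_6$ case. (It is also convenient to first reduce to graphs with no clique cutset: such a cutset in a $3$-colourable graph has at most $3$ vertices, so one may guess its $O(1)$ colourings and recurse on the strictly smaller, still $P_7$-free, pieces.)
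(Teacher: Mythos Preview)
The paper does not prove this theorem at all: it is stated in the introduction with a citation to \cite{c1} (Bonomo, Chudnovsky, Maceli, Schaudt, Stein, Zhong) and is used purely as background to situate the paper's own result. There is therefore no ``paper's own proof'' to compare your proposal against; the appropriate thing to write here is simply a reference to \cite{c1}, not an argument.

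As for the content of your sketch: it is a plausible high-level caricature of the strategy in \cite{c1} (reduce to lists of size at most $2$ via Edwards/2-SAT, exploit domination structure in $P_t$-free graphs, branch on a bounded-size dominating configuration), and you are candid that the ``stubborn vertices'' step is where the real difficulty lies. But precisely because you defer that step --- saying only that ``a further bounded round of branching, or a recursion on a strictly simpler instance'' should work, with a potential function to be supplied --- what you have written is an outline rather than a proof. The actual argument in \cite{c1} is substantially more intricate than one or two rounds of domination-and-branch, and your sketch gives no mechanism guaranteeing that the recursion terminates in polynomially many leaves. So even setting aside that no proof is called for here, the proposal as written has a genuine gap at exactly the point you identify as the technical heart.
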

\begin{theorem} [\cite{4p6}]
	The \textsc{4-coloring problem} can be
	solved in polynomial time for the class of $P_6$-free graphs.
\end{theorem}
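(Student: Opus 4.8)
The plan is to solve the more general \textsc{list-4-coloring problem} on $P_6$-free graphs, of which $4$-coloring is the special case in which every list equals $\{1,2,3,4\}$. Two observations drive the approach. First, if some vertex $v$ has a singleton list $\{c\}$, we may color $v$ by $c$, delete it, and remove $c$ from the lists of its neighbours; iterating this propagation either exposes an empty list (in which case we reject) or produces an equivalent instance in which every list has size at least $2$. Second, it is standard that a list-coloring instance in which every list has size at most $2$ is solvable in polynomial time, by encoding the two admissible colors at each vertex as a Boolean variable and each edge as a clause, i.e.\ as an instance of 2-SAT. Consequently the whole difficulty is concentrated in the vertices whose lists have size $3$ or $4$, and the strategy is to branch on a polynomially bounded amount of information that shrinks all of these lists to size at most $2$, after which the propagation-plus-2-SAT routine finishes the job.

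To control the size-$3$ and size-$4$ lists I would use domination. Since list-coloring decomposes over connected components, I may assume $G$ is connected. I would invoke the structural fact that every connected $P_6$-free graph has a dominating induced $C_6$ or a dominating biclique (complete bipartite subgraph) $D$: every vertex outside $D$ has a neighbour in $D$, so once the colors on $D$ are fixed, every outside vertex loses at least one color from its list. When $D$ is a $C_6$ it has only six vertices, so we simply enumerate its at most $4^6$ colorings and propagate for each; this case is routine.

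The genuine difficulty is the biclique case, where $D = A \cup B$ with $A$ completely joined to $B$ may be arbitrarily large, so brute-force enumeration of its colorings is impossible. Here I would exploit the bipartite structure: in any proper $4$-coloring the set of colors appearing on $A$ is disjoint from the set appearing on $B$, so one side uses at most two colors and there are only constantly many ways to split $[4]$ between the two sides. The obstruction — which I expect to be the main one — is that knowing the global color set of each side does not by itself determine the forbidden colors of an outside vertex $v$, since $v$ may be adjacent to only part of $A$ and part of $B$. Resolving this is the crux: I anticipate needing a refined notion of a structured (``excellent'') precoloring that records, rather than the full coloring of $D$, just enough local information to pin down which colors are ruled out at each outside vertex, together with an argument that only polynomially many such precolorings must be tried and that at least one of them extends whenever $G$ is $4$-colourable. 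I would generate these precolorings by combining the color-split guess with the constraints that $P_6$-freeness imposes on how an outside vertex attaches to $A$ and to $B$, so that each surviving instance is reduced to all lists of size at most $2$.

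Assembling the pieces, the algorithm produces a polynomial-size family of list assignments such that the original instance is colourable if and only if some member of the family is, and each member is solvable by propagation and 2-SAT; the dominating-structure case analysis is what guarantees that the family has polynomial size. As indicated, the essential work lies in the biclique case and in designing the excellent precoloring so that the branching stays polynomial while still capturing every genuine coloring, whereas the reductions to $2$-SAT, the treatment of the $C_6$ case, and the decomposition into components are all routine.
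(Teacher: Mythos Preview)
This theorem is not proved in the present paper; it is cited from \cite{4p6} as prior work, so there is no proof here against which to compare your proposal.

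As for the proposal on its own terms: the high-level architecture you outline---solve the list version, exploit a dominating substructure of a connected $P_6$-free graph, branch over polynomially many partial colourings of that structure so as to cut every remaining list to size at most two, and finish with $2$-\textsc{SAT}---does match the overall strategy of \cite{4p6}, and the dominating-$C_6$ case is indeed trivial. But in the biclique case you have written a plan, not a proof. You correctly isolate the obstruction (an outside vertex may attach only partially to each side of the biclique, so guessing the colour split of the two sides does not by itself determine which colours that vertex loses), and you name the device that is supposed to fix it (``excellent'' precolourings enumerated in polynomial number), yet you supply neither a definition of such a precolouring nor an argument that polynomially many suffice and that at least one is compatible with any existing $4$-colouring. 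In \cite{4p6} this is precisely where almost all of the work lies: designing and analysing these controlled branchings occupies the overwhelming bulk of a very long paper, with several rounds of refinement and extensive case analysis. Your proposal locates the difficulty accurately but does not resolve it, so as it stands it is a description of the intended route rather than a proof.
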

\begin{theorem}[\cite{hoang}] The \textsc{$k$-coloring problem} can be
	solved in polynomial time for the class of $P_5$-free graphs.
\end{theorem}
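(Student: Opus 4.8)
The plan is to solve the more general \emph{list}-$k$-coloring problem, of which $k$-coloring is the special case $L(v)=[k]$ for all $v$; working with lists is what makes a recursion possible, since fixing the colour of a vertex naturally shrinks the lists of its neighbours. The base case will be the instances in which every list has size at most $2$: list-coloring with lists of size at most two is equivalent to a $2$-SAT instance (introduce a variable for each vertex recording which of its two admissible colours it takes, and a clause for each edge), and so is solvable in polynomial time. Thus the whole task reduces to showing that a list-$k$-coloring instance on a $P_5$-free graph can be transformed, in polynomial time, into a polynomial number of instances in which all lists have size at most $2$, preserving colorability.

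The structural engine is a theorem of Bacs\'o and Tuza: every connected $P_5$-free graph has a dominating set that is either a clique or induces a $P_3$. (We may assume $k\ge 3$, the cases $k\le 2$ being solvable directly.) I would use it as follows. Working on one connected component, enumerate all vertex subsets of size at most $k$ and test each for being a dominating clique or a dominating induced $P_3$; this takes time $O(n^k)$, polynomial for fixed $k$. If no such set is found, then by the Bacs\'o--Tuza theorem the component has a dominating clique of size exceeding $k$, hence contains $K_{k+1}$ and is not $k$-colorable, so we reject. Otherwise we obtain a dominating set $D$ with $|D|\le k$, and we branch over all list-respecting proper colorings of $D$; since $|D|\le k$ there are at most $k^k$ of them, a constant for fixed $k$. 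In each branch we delete $D$ and remove from the list of every remaining vertex the colours used on its neighbours in $D$, rejecting the branch if some list becomes empty. Since induced subgraphs of $P_5$-free graphs are $P_5$-free, the procedure can then be applied recursively to the components of $G-D$.

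For this to run in polynomial time the recursion depth must be bounded by a function of $k$ alone, and the natural way to guarantee this is to arrange that in every branch the \emph{maximum list size strictly decreases}. If that holds, the depth is at most $k$, the branching factor is at most $k^k$, and since the instances produced at each level partition the vertex set the recursion tree has at most $(k^k)^k$ times a polynomial in $n$ nodes; as $k$ is fixed this is polynomial, and the per-node work (finding $D$, branching, propagating) is polynomial as well.

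The hard part is precisely this guarantee of progress. A vertex $v\notin D$ is dominated by $D$, so it has a neighbour in $D$; but its list shrinks only if some neighbour in $D$ is assigned a colour that actually lies in $L(v)$, and this can fail when $v$ has few neighbours in $D$ or when those neighbours are coloured outside $L(v)$. The easy case is $|N(v)\cap D|+|L(v)|>k$, where the colours on $N(v)\cap D$, being distinct, are forced to meet $L(v)$, so $v$ must shrink. Handling the remaining vertices is the combinatorial heart of the argument, and here I expect to need the full strength of $P_5$-freeness, which sharply restricts how the connected components of $G-D$ attach to $D$: a long induced path inside a component of $G-D$ together with two of its attachment points in $D$ would create an induced $P_5$. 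I would use this to show that the non-shrinking vertices either have their connecting edges rendered inert (their lists are disjoint from the colours on their $D$-neighbours, so those edges can be deleted without affecting colorability) or are confined to well-structured pieces that can be peeled off and recursed on separately without increasing the relevant measure. Making this dichotomy precise --- so that every branch genuinely reduces the maximum list size while the number of generated instances stays polynomial --- is the main obstacle, and is where the bulk of the technical work will lie.
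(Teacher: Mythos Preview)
This theorem is only \emph{quoted} in the present paper as a background result from Ho\`ang, Kami\'nski, Lozin, Sawada and Shu; the paper supplies no proof of its own, so there is nothing here to compare your attempt against. I will therefore comment on your proposal on its own terms and against what Ho\`ang et~al.\ actually do.

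Your high-level plan---Bacs\'o--Tuza to obtain a dominating clique or $P_3$, branch on its colourings, propagate to lists, recurse, and finish with $2$-SAT once every list has size at most two---is exactly the framework of the cited paper. The gap you yourself flag is real and is not a mere technicality: with the dominating set chosen in the \emph{whole} component, a vertex $v$ can have all of its $D$-neighbours coloured outside $L(v)$, its list does not shrink, and your ``inert edges / peel-off pieces'' dichotomy does not close the argument as stated. The recursion depth is then not bounded by any function of $k$, and the scheme as written is not polynomial.

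The missing idea, and the point where Ho\`ang et~al.\ differ from your outline, is \emph{where} Bacs\'o--Tuza is applied. One does not dominate the current component of $G$; instead one fixes a list $S$ of maximum size, takes a connected component $C$ of the subgraph induced by $\{v : L(v)=S\}$, and finds a dominating clique or $P_3$ \emph{inside} $C$. Now every vertex of $C$ has a neighbour in the dominating set whose colour lies in $S=L(v)$, so its list is guaranteed to shrink; no case analysis on ``non-shrinking'' vertices is needed. The recursion is then organised over the partially ordered set of lists (by inclusion), not merely over the maximum list size, and the running-time analysis---which is where one must show that processing the various components does not blow up---is what occupies the body of the Ho\`ang et~al.\ paper. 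Your proposal stops precisely before this structural step.
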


\begin{theorem}[\cite{huang}] The \textsc{4-coloring problem} is
	$NP$-complete for the class of $P_7$-free graphs.
\end{theorem}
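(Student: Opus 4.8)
The plan is to establish two things. First, membership in NP, which is immediate: a purported $4$-coloring of $G$ is a certificate of polynomial size that can be checked in linear time by scanning the edges. Second, and where all the work lies, NP-hardness via a polynomial-time reduction from a convenient NP-complete problem, engineered so that the graph it outputs is always $P_7$-free. A natural source is \textsc{Not-All-Equal $3$-SAT} (NAE-$3$-SAT). Given a formula $\phi$ with variables $x_1,\dots,x_n$ and clauses $C_1,\dots,C_m$, I would construct a graph $G_\phi$ (with no list at all, since the theorem concerns plain $4$-coloring) such that $G_\phi$ is $4$-colorable if and only if $\phi$ admits a not-all-equal satisfying assignment. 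The four colors $\{1,2,3,4\}$ are split into two \emph{Boolean} colors, say $1$ and $2$, used to encode the truth values of variables, and two \emph{control} colors $3$ and $4$, used only inside auxiliary gadgets.

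The construction rests on a \emph{palette gadget}: to force a vertex $v$ to receive a color from a prescribed set $S \subseteq \{1,2,3,4\}$, I would join $v$ to a small clique whose vertices are pinned to the colors of $\{1,2,3,4\} \setminus S$, the pinning itself being enforced by a fixed reference copy of $K_4$. Using this device, each variable $x_i$ is represented by a vertex $v_i$ with palette $\{1,2\}$ (color $1$ for true, color $2$ for false); occurrences of a literal and its negation are tied together by adjacencies that force equal or opposite Boolean colors; and each clause $C_j$ is represented by a gadget that is $4$-colorable exactly when its three literal vertices are not all assigned the same Boolean color. Correctness of the reduction then amounts to checking that a proper $4$-coloring of $G_\phi$ restricts to an NAE-satisfying assignment of $\phi$, and conversely that every such assignment extends to a proper $4$-coloring; this is a routine (if tedious) verification of the consistency and clause gadgets.

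The main obstacle is the $P_7$-freeness: the gadgets and their interconnections must be wired so that no seven vertices induce a path, while still enforcing all the coloring constraints. The standard device for this is to make the adjacency ``join-like.'' I would arrange $V(G_\phi)$ into a bounded number of blocks with complete bipartite (join) adjacency between the relevant pairs of blocks, together with a central clique through which the gadgets communicate. The key structural fact is that in a join $A + B$ an induced path can contain only a bounded number of vertices drawn from $A$ and $B$ together, since three path-vertices on one side all adjacent to a common vertex on the other would force a chord. The heart of the proof is to show that, after these joins, every induced path is confined to bounded ``width'' in the construction and hence has at most six vertices. I expect this structural verification --- a careful case analysis of how an induced path can enter and leave each gadget through the join edges and the central clique --- to be the principal technical burden, and the place where the precise design of the gadgets must be tuned so that coloring-correctness and $P_7$-freeness hold simultaneously. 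Finally, since $G_\phi$ has $O(n+m)$ vertices and is built in polynomial time, the reduction is polynomial, which completes the argument.
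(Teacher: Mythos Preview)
The paper does not contain a proof of this statement: it is quoted in the introduction as a known result of Huang~\cite{huang}, with no argument given. There is therefore nothing in the present paper to compare your proposal against.

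As for the proposal itself, it is a reasonable high-level blueprint for an NP-hardness reduction, but it is not a proof. You correctly identify that the entire difficulty lies in simultaneously achieving (i) coloring-correctness of the gadgets and (ii) $P_7$-freeness of the output graph, yet neither is actually carried out: no concrete clause gadget is specified, and the $P_7$-freeness argument is deferred to an unspecified ``careful case analysis.'' In reductions of this type the two requirements pull in opposite directions---adding join edges to kill long induced paths tends to create unwanted coloring constraints, while keeping gadgets sparse enough to be independently colorable tends to create long paths through successive gadgets---and the genuine content of such a theorem is a specific construction that resolves this tension. Without exhibiting the gadgets and verifying both properties, the proposal remains a plan rather than a proof. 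If you want to complete it, you would need to look at Huang's actual construction (or produce your own and verify it in full); the present paper will not help, since it only uses the statement as background.
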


\begin{theorem}[\cite{huang}] For all $k \geq 5$, the
	\textsc{$k$-coloring problem} is $NP$-complete for the class of
	$P_6$-free graphs.
\end{theorem}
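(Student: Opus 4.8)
The plan is to establish membership in $NP$ and then $NP$-hardness by a polynomial-time reduction. Membership is immediate: a purported $k$-coloring is a certificate of size $O(|V(G)|)$ whose validity is checked in polynomial time by inspecting every edge. For hardness I would first reduce the general statement to the single case $k=5$. Given a $P_6$-free graph $G$, form $G^+$ by adding a clique $Q$ of $k-5$ new vertices, each made adjacent to every vertex of $V(G)$ and to one another. Since each new vertex is universal, it cannot be an internal or end vertex of any induced $P_6$, so $G^+$ is $P_6$-free whenever $G$ is; and because the $k-5$ universal vertices are forced to receive $k-5$ pairwise distinct colors, all distinct from every color appearing on $V(G)$, the graph $G^+$ is $k$-colorable if and only if $G$ is $5$-colorable. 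Thus it suffices to prove that $5$-coloring is $NP$-hard on $P_6$-free graphs.

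For the base case I would reduce from $3$-SAT (equivalently one may use NAE-$3$-SAT). The backbone of the construction is a \emph{palette clique} $Q=\sset{q_1,\dots,q_5}$ inducing a $K_5$; in any $5$-coloring its vertices receive the five colors in some order, which we may normalise so that $q_i$ gets colour $i$. This lets me simulate \emph{lists}: joining a vertex $v$ to every $q_j$ with $j\notin A$ forces the colour of $v$ into the set $A\subseteq[5]$. Using this device, each Boolean variable $x$ is represented by a vertex with effective list $\sset{1,2}$, with colour $1$ read as \textsc{true} and colour $2$ as \textsc{false}; and each clause is represented by a small gadget, attached to the three literal vertices and to $Q$, that admits a valid colouring precisely when at least one of its literals is coloured \textsc{true}. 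A routine gadget-by-gadget argument then shows that the resulting graph $H$ is $5$-colourable if and only if the formula is satisfiable.

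The main obstacle is not correctness of the logical encoding but ensuring that $H$ is $P_6$-free, since long induced paths are exactly what one must avoid and the naive wiring between distant variable and clause gadgets is where they appear. My design principle for controlling this is to keep every gadget dense --- built from cliques and near-cliques --- and to route all long-range interaction through shared high-degree hub vertices (the palette $Q$ together with a bounded number of auxiliary hubs), so that any candidate induced path on six vertices is forced to meet a hub or a gadget interior that supplies a chord. Concretely, I would argue that a hypothetical induced $P_6 = p_1p_2\cdots p_6$ cannot lie inside a single gadget (each is too dense), cannot use a palette vertex as an internal vertex without acquiring a chord, and cannot traverse two gadgets without passing through a common hub that is adjacent to two non-consecutive $p_i$; examining the finitely many patterns of how such a path could enter and leave the gadgets yields a contradiction in every case. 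The bulk of the work is this exhaustive but finite interaction analysis; once it is complete, combined with the $k=5$-to-$k$ reduction above, the theorem follows.
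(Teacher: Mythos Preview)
The paper does not prove this statement at all; it is quoted as a known result with a citation to \cite{huang} and no argument is given. Consequently there is no ``paper's own proof'' to compare your proposal against.

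Viewed on its own merits, your outline is headed in the right direction and the reduction from the case $k=5$ to arbitrary $k\ge 5$ via $k-5$ universal vertices is correct and standard. The difficulty, as you yourself acknowledge, is the base case: you describe a \emph{strategy} (palette clique, list simulation, dense gadgets, hub routing) but never exhibit a concrete clause gadget, and the verification that the resulting graph is $P_6$-free is deferred to an ``exhaustive but finite interaction analysis'' that is not carried out. That analysis is precisely where the content of the theorem lives --- it is easy to build gadgets that encode satisfiability, and hard to build ones that do so while excluding induced $P_6$'s --- so as written the proposal is a plan rather than a proof. To turn it into one you would need to specify the clause gadget explicitly and then perform the promised case analysis (or reproduce the construction from \cite{huang}).
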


The only case for which the complexity of $k$-coloring $P_t$-free
graphs is not known  $k=3$, $t \geq 8$.  In this paper, we consider the \textsc{list-$3$ coloring problem} for $P_t$-free graphs with no induced  1-subdivision of $ K_{1,s}$. We use $ SDK_s $ to denote the one-subdivision of $K_{1,s}$. 
The main result is the following:
\begin{theorem}
For all positive integers $s$ and $t$, the	\textsc{list-$3$ coloring problem} can be solved in polynomial time for the class of $(P_t,SDK_s)$-free
graphs.
\end{theorem}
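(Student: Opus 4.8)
The plan is to combine the standard propagation-and-$2$-SAT machinery for list-$3$-colouring with a bounded-depth branching recursion, in which $P_t$-freeness controls the depth of the recursion and $SDK_s$-freeness controls the branching at each step. First I would carry out the usual preprocessing. Since all lists are subsets of $\sset{1,2,3}$, they have size at most $3$, and we reject immediately if some list is empty. Then, repeatedly, while some vertex $v$ has $|L(v)| = 1$, we colour $v$ with its unique available colour, delete it, and remove that colour from every neighbour's list, rejecting if some list becomes empty. This is polynomial and yields an equivalent instance in which every list has size $2$ or $3$. If all lists have size at most $2$, then $(G,L)$ translates in the standard way into a $2$-SAT instance (one Boolean variable per vertex, at most two clauses per edge) and is solved in polynomial time. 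So the task reduces to handling the set $T$ of vertices with lists of size exactly $3$, and since colourability of $(G,L)$ is the conjunction of colourability over the connected components, we may assume $G$ is connected.

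Next I would run a recursion on enriched instances of the form $\spc$, together with the variants $\sspc$ and $\esspc$. Here $S$ is a connected subset of $V(G)$ carrying a proper colouring $f$ of $G[S]$; the sets $X_0, X$ record the neighbours of $S$, separated according to how much their lists have already shrunk; and $Y_0, Y, Y^*$ record the currently undominated part $V(G)\setminus N[S]$ and a distinguished sub-part to be processed next. A recursion step replaces the current enriched instance by a polynomial-size family of ``smaller'' enriched instances whose disjunction is equivalent to it: it selects a vertex of $T$ that is not yet dominated by $S$ (if there is none, every surviving list of size $3$ is killed by some coloured neighbour, and we finish via $2$-SAT as above), and then branches by adding to $S$, with all admissible colours, a carefully chosen dominating-type structure around that vertex together with a part of $N(S)$.

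Two structural facts keep this polynomial. Since $G$ is connected and $P_t$-free, shortest paths are induced, so $G$ has radius at most $t-2$; a breadth-first-layering argument then shows that after a bounded-in-$t$ number of rounds of the operation ``grow the connected seed $S$ by a dominating structure of its current neighbourhood'', $S$ dominates all of $G$ --- so the recursion has depth bounded in terms of $t$. The remaining danger is that a single round might naively require colouring $\Theta(n)$ vertices, giving exponential branching; this is exactly where $SDK_s$-freeness enters. The key structural lemma has the following shape: if $S$ is connected and coloured and $Y$ is a component of $V(G)\setminus N[S]$ that meets $T$, then either $Y$ together with its attachment into $N(S)$ has bounded size, or a bounded number of ``hub'' vertices of $N(S)$ have neighbourhoods that together cover $Y$ --- for otherwise one could locate a vertex $c$, an independent set $\sset{w_1,\dots,w_s}\subseteq N(c)$, and distinct vertices $\ell_1,\dots,\ell_s$ with each $\ell_i$ adjacent to $w_i$, adjacent to no other $w_j$, and non-adjacent to $c$, namely an induced $SDK_s$. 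Consequently each round needs only polynomially many branches, with exponent a function of $s$ and $t$, and a monovariant built from $|V(G)\setminus N[S]|$, $|T|$, and a round counter bounded in terms of $t$ strictly decreases at every step, so the whole recursion tree has polynomial size.

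I expect the main obstacle to be exactly this structural lemma and the bookkeeping around it: choosing the partition into $X_0, X, Y_0, Y, Y^*$ so that (a) the branching genuinely enumerates, up to equivalence, all colourings of $(G,L)$; (b) each branch is again a legitimate enriched instance of one of the three types; and (c) the monovariant genuinely drops at every step while the branching stays polynomial. In particular one must treat the case where $Y$ does not shrink in size but $Y\cap T$ does, and the transitions that pass from $\spc$ to $\sspc$ or $\esspc$ via the hub structure before recursing; getting these to fit together --- with $(P_t, SDK_s)$-freeness invoked at each turn to keep the branching under control --- is the technical heart of the argument.
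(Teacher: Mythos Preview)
Your plan points in the right direction --- both you and the paper ultimately use that a connected $P_t$-free graph has BFS depth at most $t-2$, and that $SDK_s$-freeness bounds how many ``hubs'' are needed to dominate the next layer --- but you have wrapped this in recursion and branching machinery that the paper shows is unnecessary. The paper's argument is far more direct: first check in $O(|V|^4)$ time that $G$ is $K_4$-free (else reject); then, with no branching on colourings at all, build a dominating set $S$ of size at most $M_{s,t}=(1+R(4,R(4,s)))^{t-2}$ by a deterministic layering procedure. Starting from a single vertex, in each of $t-2$ rounds one partitions $N(S)$ among the vertices of $S$ and, for each part, adds to $S$ a \emph{minimal} subset dominating every currently undominated vertex with a neighbour in that part; minimality together with $K_4$-freeness and $SDK_s$-freeness bounds each such subset by $R(4,R(4,s))$ via two applications of Ramsey's theorem. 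Only after $S$ is built does one enumerate all $3^{|S|}\le 3^{M_{s,t}}$ colourings of $S$ and solve each residual instance with lists of size at most $2$. Separating the dominating-set construction from the colouring enumeration eliminates your enriched instances $(G,S,X_0,X,Y_0,Y,f)$, the three variants, and the monovariant entirely.

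There is also a genuine omission in your structural sketch: you never invoke $K_4$-freeness, yet it is essential. To extract an induced $SDK_s$ from a centre $c$ with many private-neighbour pairs $(w_i,\ell_i)$ you need both $\{w_1,\dots,w_s\}$ and $\{\ell_1,\dots,\ell_s\}$ to be independent (you only wrote ``distinct'' for the $\ell_i$); the paper obtains this by two Ramsey steps, each using that $G$ is $K_4$-free, which is legitimate precisely because we are $3$-colouring and have already rejected instances containing a $K_4$. Without this step your ``bounded number of hubs'' claim, and hence the polynomial bound on the branching, does not follow.
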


\section{Preliminaries}

We need two theorems: the first one is the famous Ramsey Theorem \cite{Ramsey}, and the second is a result of Edwards \cite{edwards}:
\begin{theorem}[\cite{Ramsey}]
	\label{Ramsey}
	For each pair of positive integers $k$ and $l$, there exists an integer $R(k,l)$ such that every graph with at least $R(k,l)$ vertices contains a clique with at least $k$ vertices or an independent set with at least $l$ vertices.
\end{theorem}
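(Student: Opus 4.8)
The plan is to prove the existence of $R(k,l)$ by induction, simultaneously establishing the quantitative bound $R(k,l) \le R(k-1,l) + R(k,l-1)$. First I would dispose of the boundary cases: taking $R(1,l) = R(k,1) = 1$ suffices, since any graph on at least one vertex trivially contains both a clique of size $1$ and an independent set of size $1$. This anchors the induction along the edges of the parameter region, so that for the inductive step we may assume $k \ge 2$ and $l \ge 2$.

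For the inductive step, suppose both $R(k-1,l)$ and $R(k,l-1)$ exist, and set $n = R(k-1,l) + R(k,l-1)$. I would show that every graph $G$ with at least $n$ vertices contains a clique of size $k$ or an independent set of size $l$, which yields $R(k,l) \le n$. The crucial device is a neighborhood-splitting argument: fix any vertex $v \in V(G)$, and partition the remaining vertices into the set $A$ of neighbors of $v$ and the set $B$ of non-neighbors of $v$. Since $|A| + |B| \ge n - 1 = R(k-1,l) + R(k,l-1) - 1$, the pigeonhole principle forces either $|A| \ge R(k-1,l)$ or $|B| \ge R(k,l-1)$.

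In the first case, the subgraph induced on $A$ contains, by the inductive hypothesis, either an independent set of size $l$ — which is already what we want — or a clique of size $k-1$; in the latter event, appending $v$, which is adjacent to every vertex of $A$, produces a clique of size $k$. In the second case, the subgraph induced on $B$ contains either a clique of size $k$ — as desired — or an independent set of size $l-1$, and appending $v$, which is non-adjacent to every vertex of $B$, yields an independent set of size $l$. In every case $G$ contains the required structure, completing the inductive step.

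Because the recurrence strictly decreases $k+l$ at each stage and terminates at the base cases $R(1,l)$ and $R(k,1)$, the double induction is well-founded and establishes that $R(k,l)$ exists for all positive integers $k$ and $l$. I do not expect a genuine obstacle here; the only point demanding care is the bookkeeping of the induction — verifying that both recursive quantities $R(k-1,l)$ and $R(k,l-1)$ are covered by the inductive hypothesis, and that the two base cases correctly control the entire boundary of the parameter range.
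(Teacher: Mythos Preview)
Your argument is the standard Erd\H{o}s--Szekeres inductive proof of Ramsey's theorem and is correct; the base cases, the neighborhood split at a vertex $v$, the pigeonhole step giving $|A| \ge R(k-1,l)$ or $|B| \ge R(k,l-1)$, and the adjoining of $v$ to the smaller clique or independent set are all carried out properly. Note, however, that the paper does not supply its own proof of this statement: it is quoted from \cite{Ramsey} as a known result and used as a black box (in Lemma~\ref{size}), so there is no in-paper argument to compare against.
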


\begin{theorem}[\cite{edwards}]
	\label{Edwards}
	Let $G$ be  a graph, and let $L$ be a list assignment for $G$ 
	such that $|L(v)|\leq 2$
	for all $v\in V(G)$. Then a coloring of $(G,L)$, or a
	determination that none exists, can be obtained in time
	$O(|V(G)|+|E(G)|)$.
\end{theorem}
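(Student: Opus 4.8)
The plan is to reduce the problem to \textsc{2-satisfiability}, which admits a linear-time algorithm. First I would dispose of the vertices whose lists have fewer than two colors. If some vertex has an empty list, then no coloring exists. If some vertex $v$ has a singleton list $\sset{c}$, then $v$ must receive color $c$, and so I delete $c$ from $L(u)$ for every neighbour $u$ of $v$; this may in turn create new empty or singleton lists, which I process the same way. By managing this propagation with a queue and charging each color-deletion to the edge along which it is triggered, the whole preprocessing phase runs in time $O(|V(G)| + |E(G)|)$ and either reports that $(G, L)$ is not colorable or produces a residual instance, on the set of still-uncolored vertices, in which every vertex $v$ has $|L(v)| = 2$.

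For the residual instance I would build an \emph{implication digraph} $D$. For each vertex $v$ with $L(v) = \sset{a_v, b_v}$ I introduce two literals, ``$v$ gets $a_v$'' and ``$v$ gets $b_v$'', which are negations of one another (exactly one color is used at $v$). For each edge $uv$ of $G$ and each color $c \in L(u) \cap L(v)$, the constraint $f(u) \neq f(v)$ forbids coloring both endpoints $c$; I encode this by the two arcs ``$u$ gets $c$'' $\rightarrow$ ``$v$ gets the other color of $L(v)$'' and ``$v$ gets $c$'' $\rightarrow$ ``$u$ gets the other color of $L(u)$''. Since $|L(u) \cap L(v)| \leq 2$, each edge contributes at most four arcs, so $D$ has $O(|V(G)|)$ vertices and $O(|E(G)|)$ arcs.

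A coloring of the residual instance corresponds exactly to a choice of one literal from each complementary pair that respects every implication of $D$. By the standard characterization of 2-satisfiability, such a choice exists if and only if no literal and its negation lie in the same strongly connected component of $D$; and when it exists, one is recovered by taking literals in reverse topological order of the condensation of $D$. Computing the strongly connected components of $D$ and this order takes linear time by Tarjan's algorithm, so the residual instance is solved in $O(|V(G)| + |E(G)|)$ time. Combining the two phases and translating the chosen literals back into colors yields a coloring of $(G, L)$, or a proof that none exists, within the claimed bound.

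The main obstacle I anticipate is making the preprocessing genuinely linear: naively re-scanning neighbourhoods after each forced color can cost $\Theta(|V(G)| \cdot |E(G)|)$, so some care is needed to touch each edge only a bounded number of times and to detect newly-created singletons incrementally as color-deletions occur. The 2-satisfiability reduction itself and the linear-time strongly-connected-component computation are standard and present no real difficulty.
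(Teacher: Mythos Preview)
Your reduction to \textsc{2-satisfiability} is correct and is the standard way to establish this result; the preprocessing of small lists followed by building the implication digraph and computing strongly connected components in linear time is exactly the right outline, and your remark about charging color-deletions to edges handles the only subtle point.

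There is nothing to compare against, however: the paper does not prove this statement at all. It is stated as Theorem~\ref{Edwards} with a citation to Edwards~\cite{edwards} and is used as a black box (in the proof of Corollary~\ref{Edwards2} and, through it, the main theorem). Your sketch is essentially the argument in Edwards' original paper, so in that sense you have reproduced the cited proof rather than diverged from it.
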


Let $G$ be a graph with list assignment $L$. For $X \subseteq V(G)$ we denote by
$G|X$ the subgraph induced by $G$ on $X$,  by
$G \setminus X$ the graph $G|(V(G) \setminus X)$ and by
$(G|X,L)$ the list coloring problem where we restrict the
domain of the list assignment $L$ to $X$. 
For $v \in V(G)$ we write $N_G(v)$ (or $N(v)$ when there is no danger of confusion) to mean the set of vertices of $G$ that are adjacent to  $v$. For $X\subseteq V(G)$ we write $N_G(X)$ (or $N(X)$ when there is no danger of confusion) to mean $\bigcup_{v\in X}N(v)$.
We say that $D\subseteq V(G)$ is a \emph{dominating set} of $G$ if for every vertex $v\in G\setminus D$, $N(v)\cap D\neq \emptyset$. By Theorem~\ref{Edwards}, the following corollary immediately follows.
\begin{Corollary}
	\label{Edwards2}
	Let $G$ be  a graph, $L$ be a $3$-list assignment for $G$ and let $D$ be a dominating set of $G$. Then a coloring of $(G,L)$, or a
	determination that $(G,L)$ is not colorable, can be obtained in time
	$O(3^{|D|}(|V(G)|+|E(G)|))$.
\end{Corollary}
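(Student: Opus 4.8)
The plan is to reduce the problem to Theorem~\ref{Edwards} by brute-forcing the colors on the dominating set $D$. Since $L$ is a $3$-list assignment, each vertex of $D$ has at most three available colors, so there are at most $3^{|D|}$ candidate assignments $g : D \to [3]$ with $g(d) \in L(d)$ for every $d \in D$. I would enumerate these candidates one at a time.

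For a fixed candidate $g$, I would first check that $g$ is a proper coloring of $G|D$; if not, the candidate is discarded. Otherwise I would define a reduced list assignment $L'$ on $V(G) \setminus D$ by setting $L'(v) = L(v) \setminus \{g(d) : d \in N(v) \cap D\}$ for each $v \notin D$. The crucial observation is that, because $D$ is a dominating set, every $v \in V(G) \setminus D$ has a neighbor $d \in D$, and the color $g(d)$ is removed from $L(v)$; hence $|L'(v)| \leq |L(v)| - 1 \leq 2$. Thus $(G \setminus D, L')$ is an instance in which every list has size at most two, so Theorem~\ref{Edwards} applies directly and, in time $O(|V(G)| + |E(G)|)$, either produces a coloring $h$ of $(G \setminus D, L')$ or certifies that none exists.

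Next I would argue that the reduction is correct. Given $g$ and an $L'$-coloring $h$ of $G \setminus D$, the combined map $f$ (equal to $g$ on $D$ and to $h$ elsewhere) is a coloring of $(G,L)$: it respects $L$ by construction, and it is proper because edges inside $D$ are handled by the properness of $g$, edges inside $V(G)\setminus D$ by $h$, and edges between $D$ and $V(G)\setminus D$ by the definition of $L'$, which forbade each $v \notin D$ from reusing a color of a neighbor in $D$. Conversely, any coloring of $(G,L)$ restricts on $D$ to one of the candidates $g$ and on $V(G)\setminus D$ to a coloring of $(G\setminus D, L')$; hence $(G,L)$ is colorable if and only if the extension step succeeds for at least one candidate $g$.

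Finally I would aggregate the results: output the first full coloring $f$ found, or report that $(G,L)$ is not colorable once all at most $3^{|D|}$ candidates have failed. Each candidate costs $O(|V(G)|+|E(G)|)$ to build $L'$, test properness of $g$ on $G|D$, and invoke Theorem~\ref{Edwards}, and the enumeration of candidates can be organized so that this dominates; the total running time is therefore $O(3^{|D|}(|V(G)|+|E(G)|))$, as claimed. There is no genuine difficulty beyond the single observation that the domination hypothesis is precisely what forces every reduced list to have size at most two, which is exactly what licenses the use of the linear-time two-list algorithm of Theorem~\ref{Edwards}.
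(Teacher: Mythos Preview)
Your proposal is correct and follows essentially the same approach as the paper: enumerate the at most $3^{|D|}$ colorings of the dominating set, use domination to reduce the remaining lists to size at most two, and then invoke Theorem~\ref{Edwards}. The only cosmetic differences are that the paper removes the color of a single chosen neighbor in $D$ (rather than all of them) and keeps $D$ in the instance with singleton lists (rather than passing to $G\setminus D$); neither changes the argument or the running time.
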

\begin{proof}
	For every coloring $c$ of  $(G|D,L)$, in time $O(|E(G)|)$ we can define a list assignment $L_c$ of $G$ as follows:  if  $ v \in D $ we set  $ L_c(v)=\{c(v)\} $ and if $ v \notin D  $ we can pick $u\in N(v)\cap D$ by the definition of a dominating set and set $ L_c(v)=L(v)\setminus c(u)$.  Let $\mathcal{L}$ =$\{L_c:$ $c$ is a coloring of  $(G|D,L) \}$, then clearly $|\mathcal{L}|\leq 3^{|D|}$ and  $(G,L)$ is colorable if and only if there exists a $L_c\in \mathcal{L}$ such that $(G,L_c)$ is colorable. For every $L_c\in  \mathcal{L}$, by construction $ |L_c(v)|\leq 2 $ for every $v\in G$ and hence by Theorem~\ref{Edwards},  a coloring of $(G,L_c)$, or a	determination that none exists, can be obtained in time
	$O(|V(G)|+|E(G)|)$. Therefore a coloring of $(G,L)$, or a
	determination that $(G,L)$ is not colorable, can be obtained in time
	$O(3^{|D|}(|V(G)|+|E(G)|))$.
	
\end{proof}

\section{The Algorithm}
Let $s$ and $t$ be positive integers, and let $G=(V,E)$ be a connected
$ (P_t,SDK_s,K_4) $-free graph. Pick an arbitrary vertex $a\in V$ and let
$S_1=\{a\}$. For $v\in V$, let $d(v)$ be the distance from $v$ to $a$. For
$i=1,2,\dots, t-2$, we define the set $S_{i+1}$ as follows:
\begin{itemize}
	\item Let $B_i=N(S_i), W_i=V\setminus(B_i\cup S_i)$.
	\item Write  $S_i=\{v_1,v_2,\dots,v_{|S_i|}\}$ and define
           $$B_i^j=\left\{v\in \left(B_i\setminus \bigcup_{k=1}^{j-1}B_i^k\right) : v \textnormal{ is adjacent to } v_j  \right\}$$ for $j=1,2,\dots |S_i|.$
Then          $B_i=\bigcup_{j=1}^{|S_i|}B_i^j$.
	\item For $j=1,2,\dots, |S_i|$, let $X^j_i\subseteq B_i^j$ be a minimal vertex set such that for every $w\in W_i$, if $N(w)\cap B_i^j\neq \emptyset$, then  $N(w)\cap X_i^j\neq \emptyset$. Let $X_i=\bigcup_{j=1}^{|S_i|}X_i^j$.
	\item Let $S_{i+1}=S_i\cup X_i$.
\end{itemize}

It is clear that we can compute $S_{t-1}$ in $O(t|V|^2)$ time. Next, we prove some properties of this construction.
\begin{lemma}\label{size}
	For $i=1,2,\dots, t-2$, $|S_{i+1}|\leq |S_i| (1+R(4,R(4,s)))$.
\end{lemma}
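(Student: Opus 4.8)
The plan is to prove the (equivalent) statement that $|X_i^j| \le R(4,R(4,s))$ for every $j \in \{1,\dots,|S_i|\}$. This suffices: by construction the sets $B_i^1,\dots,B_i^{|S_i|}$ are pairwise disjoint and $X_i^j \subseteq B_i^j$, so $|X_i| = \sum_{j=1}^{|S_i|} |X_i^j| \le |S_i|\cdot R(4,R(4,s))$, and since $S_{i+1} = S_i \cup X_i$ we get $|S_{i+1}| \le |S_i| + |X_i| \le |S_i|(1 + R(4,R(4,s)))$, as desired.

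Suppose for contradiction that $|X_i^j| \ge R(4,R(4,s)) + 1$ for some $j$; write $v = v_j$. Since $G$ is $K_4$-free, Theorem~\ref{Ramsey} applied to $G|X_i^j$ produces an independent set $I \subseteq X_i^j$ with $|I| = R(4,s)$; note that every vertex of $X_i^j \subseteq B_i^j$ is adjacent to $v$. Next I would exploit the minimality of $X_i^j$: for each $x \in I$ the set $X_i^j \setminus \{x\}$ violates the defining property, so there is $w_x \in W_i$ with $N(w_x) \cap B_i^j \neq \emptyset$ and $N(w_x) \cap (X_i^j \setminus \{x\}) = \emptyset$; since $X_i^j$ itself satisfies the property, this forces $N(w_x) \cap X_i^j = \{x\}$. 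In particular $w_x$ is adjacent to $x$, and the vertices $w_x$ for $x \in I$ are pairwise distinct (if $w_x = w_{x'}$ with $x \neq x'$ then $w_x$ would have two neighbors $x, x'$ in $X_i^j$). Moreover $w_x \in W_i$, which is disjoint from $B_i = N(S_i) \supseteq N(v)$, so $w_x$ is not adjacent to $v$ and $w_x \neq v$.

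Now $\{w_x : x \in I\}$ has exactly $R(4,s)$ vertices, so applying Theorem~\ref{Ramsey} to it and using $K_4$-freeness again yields $J \subseteq I$ with $|J| = s$ such that $\{w_x : x \in J\}$ is independent. I claim $\{v\} \cup J \cup \{w_x : x \in J\}$ induces a copy of $SDK_s$, with $v$ the center, the vertices of $J$ the subdivision vertices, and the $w_x$ the leaves. Indeed: $v$ is adjacent to every $x \in J$ and to no $w_x$; $J$ and $\{w_x : x \in J\}$ are each independent; each $x \in J$ is adjacent to $w_x$; and for distinct $x, x' \in J$, $x$ is not adjacent to $w_{x'}$ because $N(w_{x'}) \cap X_i^j = \{x'\}$ while $x \in X_i^j$. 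This contradicts that $G$ is $SDK_s$-free.

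The only real content is this double use of Ramsey's theorem together with $K_4$-freeness: the first application extracts an independent set inside $X_i^j$, but the private neighbors supplied by minimality need not be pairwise non-adjacent, and the point I expect to be the crux is that a second application of Ramsey prunes them to an independent set so that the resulting spider has no extra edges. Verifying that the chosen vertex set induces exactly $SDK_s$ — in particular that the center misses all leaves (using $W_i \cap N(S_i) = \emptyset$) and that each subdivision vertex misses every leaf but its own (using the private-neighbor property from minimality) — is routine.
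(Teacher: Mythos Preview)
Your proof is correct and follows essentially the same approach as the paper: reduce to bounding $|X_i^j|$, use minimality to extract a private neighbor in $W_i$ for each element of $X_i^j$, apply Ramsey twice (once on the $x$'s and once on the private neighbors, using $K_4$-freeness each time) to obtain a clean $SDK_s$. The only cosmetic difference is order --- the paper picks private neighbors for all of $X_i^j$ before the first Ramsey application, whereas you first pass to the independent set $I$ and then pick private neighbors; your version is in fact a bit more explicit in verifying the non-edges needed for the induced $SDK_s$.
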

\begin{proof}
	It is sufficient to show that for each $j=1,2,\dots, |S_i|$, $ |X^j_i| \leq R(4,R(4,s))$. Suppose not, $ |X^\ell_i |=K>R(4,R(4,s))$ for some $\ell\in \{1,2\dots, |S_i|\}$. Let $ X^\ell_i =\{x_1,x_2,\dots,x_K\}$. By the minimality of $X^\ell_i $, for $j=1,2,\dots, K$, there exists $y_j\in W_i$ such that  $N(y_j)\cap X^\ell_i=\{x_j\}$. Since $G$ is $K_4$-free, by Theorem \ref{Ramsey}, there exists a stable set $X'\subseteq X^\ell_i$  of size $R(4,s)$. We may assume $X'=\{x_1,x_2,\dots,x_{R(4,s)}\}$. Let $Y'=\{y_1,y_2,\dots,y_{R(4,s)}\}$. Again by Theorem \ref{Ramsey}, there exists a stable set $Y''\subseteq Y'$  of size $s$. We may assume $Y''=\{y_1,y_2,\dots,y_s\}$ and let $X''=\{x_1,x_2,\dots,x_s\}$. Then $G[\{v_\ell\}\cup X''\cup Y'']$ is isomorphic to $ SDK_s $, a contradiction.
\end{proof}

\begin{lemma}\label{length}
	For $i=0,1,2,\dots, t-2$, $B_{i+1} \setminus (B_i \cup S_i) = \{v : d(v) = i+1\}$ (where $S_0 = \emptyset$, $B_0 = \{a\}$ and $B_{t-1} = N(S_{t-1})$). 
\end{lemma}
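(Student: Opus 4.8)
The plan is to prove the statement by induction on $i$, but in order to make the induction close I would strengthen it by carrying along, for every $i$, the two auxiliary facts
\[
B_i \cup S_i = \{v : d(v) \le i\} \qquad\text{and}\qquad S_i \subseteq \{v : d(v) \le i-1\}\ \ (i \ge 1),
\]
using the conventions $S_0 = \emptyset$, $B_0 = \{a\}$. The base case $i=0$ is immediate from the connectivity of $G$: $B_0 \cup S_0 = \{a\} = \{v : d(v) = 0\}$, and $B_1 \setminus (B_0 \cup S_0) = N(a) \setminus \{a\} = N(a) = \{v : d(v) = 1\}$, which also yields the auxiliary facts at index $1$. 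For the inductive step, I would show that once the core equality $B_{i+1}\setminus(B_i\cup S_i) = \{v : d(v) = i+1\}$ is known at index $i$, the auxiliary facts at index $i+1$ come essentially for free: since $S_{i+1} = S_i\cup X_i$ with $X_i\subseteq B_i$ we get $S_{i+1}\subseteq B_i\cup S_i = \{d\le i\}$, and since $N$ is monotone and $S_i \subseteq S_{i+1}$ we get $B_i \subseteq B_{i+1}$; hence $B_{i+1}\cup S_{i+1} = (B_i\cup S_i)\cup\bigl(B_{i+1}\setminus(B_i\cup S_i)\bigr) = \{d\le i\}\cup\{d=i+1\} = \{d \le i+1\}$.

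So the real work is the core equality at index $i$ (for $1 \le i \le t-2$), which I would prove by establishing both inclusions. For ``$\subseteq$'': if $w\in B_{i+1}\setminus(B_i\cup S_i)$, then $w$ has a neighbour $u\in S_{i+1}=S_i\cup X_i$; it cannot be that $u\in S_i$, since then $w\in N(S_i)=B_i$, so $u\in X_i\subseteq B_i$, and by the auxiliary fact $d(u)\le i$, giving $d(w)\le i+1$, while $w\notin\{d\le i\}$ forces $d(w)\ge i+1$. For ``$\supseteq$'': if $d(w)=i+1$, then $w\notin B_i\cup S_i = \{d \le i\}$, so $w\in W_i$; moreover $w$ has a neighbour $u$ with $d(u)=i$, and by the auxiliary facts $u\in B_i\cup S_i$ but $u\notin S_i$ (because $S_i\subseteq\{d\le i-1\}$), so $u\in B_i$, say $u\in B_i^j$. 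Then $w\in W_i$ with $N(w)\cap B_i^j\neq\emptyset$, so the defining covering property of $X_i^j$ produces a neighbour of $w$ in $X_i^j\subseteq X_i\subseteq S_{i+1}$, whence $w\in N(S_{i+1})=B_{i+1}$; together with $w\notin B_i\cup S_i$ this finishes the argument.

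The proof is mostly bookkeeping, and the only step that genuinely invokes the construction — the step I would regard as the crux — is the ``$\supseteq$'' inclusion, where one must locate a neighbour of $w$ inside $S_{i+1}$. This is exactly what the minimality property of $X_i^j$ is built to deliver, but it applies only after one has pinned down that the distance-$i$ neighbour $u$ of $w$ lies in $B_i$ (not merely in $B_i\cup S_i$) and that $w\in W_i$; arranging both of these is precisely why the induction hypothesis must be strengthened with $S_i\subseteq\{d\le i-1\}$ and $B_i\cup S_i=\{d\le i\}$ rather than attempting the bare statement in isolation. I note that none of the $P_t$-, $SDK_s$-, or $K_4$-freeness hypotheses are used here; only the connectivity of $G$ plays a role.
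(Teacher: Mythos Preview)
Your proof is correct and follows essentially the same route as the paper's: induction on $i$, two inclusions, with the $\supseteq$ direction hinging on the covering property of $X_i^j$. The only difference is cosmetic---you explicitly carry the auxiliary facts $B_i \cup S_i = \{d \le i\}$ and $S_i \subseteq \{d \le i-1\}$ in a strengthened induction hypothesis, whereas the paper leaves these implicit and recovers them on the fly (its ``by induction'' does that work tacitly).
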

\begin{proof}
	We use induction to prove this lemma. It is clear that for $i=0$, $B_1 = N(a) = \{v : d(v) = 1\}$. 
	
	Now suppose this lemma holds for $i < k$, where $k\in \{1,2\dots, t-2\}$. First we show that for every $v\in B_{k+1}\setminus (B_k \cup S_k)$, $d(v) = k+1$. By construction $v\in W_k$, hence $d(v) > k$ by induction. Since $v \in B_{k+1} \setminus B_k$, $v$ has a neighbor $w$ in $S_{k+1} \setminus S_k \subseteq B_k$; and thus $d(v) \leq d(w) + 1 \leq k+1$.
	
	Now let $v \in V$ with $d(v) = k+1$. It follows that $v \not\in (B_k \cup S_k)$, and $v \in B_{k+1} \cup W_{k+1}$, and $v$ has a neighbor $w \in V$ with $d(w) = k$. By induction, it follows that $v \in W_k$ and $w \in B_k$. Let $j \in \mathbb{N}$ such that $w \in B_k^j$. Since $v \in W_k$ and $N(w) \cap B_{k}^j \neq \emptyset$, it follows that $v$ has a neighbor in $X_k^j \subseteq X_k \subseteq S_{k+1}$, and therefore $v \in B_{k+1}$, as required.  This finishes the proof of Lemma~\ref*{length}.
\end{proof}

By applying Lemma~\ref{size} and Lemma~\ref{length}, we deduce several properties of $S_{t-1}$.
\begin{lemma}\label{S}
	\begin{enumerate}
		\item There exists a constant $M_{s,t}$ which only depends on $s$ and $t$ such that $|S_{t-1}|\leq M_{s,t}$.
		\item $W_{t-1}=V\setminus (S_{t-1}\cup N(S_{t-1}))=\emptyset$.
	\end{enumerate}
\end{lemma}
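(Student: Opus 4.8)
The plan is to prove both parts by chaining together Lemma~\ref{size} and Lemma~\ref{length}. For part~(1), observe that $|S_1| = 1$, and Lemma~\ref{size} gives $|S_{i+1}| \leq |S_i|(1 + R(4, R(4,s)))$ for $i = 1, \dots, t-2$; iterating this bound $t-2$ times yields $|S_{t-1}| \leq (1 + R(4, R(4,s)))^{t-2}$, which is a constant $M_{s,t}$ depending only on $s$ and $t$ (since $R(4, R(4,s))$ is a fixed number by Theorem~\ref{Ramsey}). This is the easy part.

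For part~(2), I would argue that every vertex of $G$ lies in $S_{t-1} \cup N(S_{t-1})$. The key tool is Lemma~\ref{length}, which tells us that $B_{i+1} \setminus (B_i \cup S_i)$ is precisely the set of vertices at distance exactly $i+1$ from $a$, for $i = 0, 1, \dots, t-2$. Summing (telescoping) these, together with $S_0 = \emptyset$, $B_0 = \{a\}$, one sees that $S_{t-1} \cup N(S_{t-1}) = S_{t-1} \cup B_{t-1}$ contains every vertex at distance at most $t-1$ from $a$. More precisely: $\{a\} = B_0$ has distance $0$; $B_1 \setminus B_0$ has distance $1$; and inductively $B_{i+1} \setminus (B_i \cup S_i) \subseteq B_{i+1}$ consists of the vertices at distance $i+1$. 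Since $S_i \subseteq B_{i-1} \cup S_{i-1} \cup \dots$, all of $S_{t-1} \cup B_{t-1}$ consists of vertices at distance at most $t-1$, and conversely every vertex at distance at most $t-1$ lies in $S_{t-1} \cup B_{t-1}$. Hence $W_{t-1} = V \setminus (S_{t-1} \cup B_{t-1})$ consists exactly of vertices at distance at least $t$ from $a$.

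It then remains to show that $G$ has no vertex at distance $\geq t-1$ from $a$ at all, or rather that $W_{t-1} = \emptyset$ because $G$ is $P_t$-free and connected. Indeed, if $v \in W_{t-1}$ then $d(v) \geq t$, so there is a shortest path from $a$ to $v$ in $G$ using at least $t+1$ vertices; any shortest path is an induced path, so $G$ contains an induced $P_{t+1}$, hence an induced $P_t$, contradicting $P_t$-freeness. (Even $d(v) = t-1$ would already give an induced $P_t$; the point is that $W_{t-1}$ only contains vertices with $d(v) \geq t$, which is more than enough.) Therefore $W_{t-1} = \emptyset$, which is exactly statement~(2).

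The main obstacle, such as it is, is the bookkeeping in part~(2): one must carefully verify from Lemma~\ref{length} that the union $S_{t-1} \cup B_{t-1}$ captures exactly the ball of radius $t-1$ around $a$, paying attention to the roles of the $S_i$ versus the $B_i$ and the base cases $S_0 = \emptyset$, $B_0 = \{a\}$, $B_{t-1} = N(S_{t-1})$. Once that is in place, the $P_t$-freeness argument via shortest (hence induced) paths is immediate, and part~(1) is a one-line iteration of Lemma~\ref{size}.
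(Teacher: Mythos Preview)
Your proof is correct and follows essentially the same approach as the paper: iterate Lemma~\ref{size} for part~(1), and for part~(2) use Lemma~\ref{length} to see that $S_{t-1}\cup N(S_{t-1})$ contains all vertices at distance at most $t-1$ from $a$, then invoke $P_t$-freeness via a shortest (hence induced) path. The paper's writeup is more compressed, simply asserting $\{v : d(v)\leq t-1\}\subseteq S_{t-1}\cup N(S_{t-1})$ from Lemma~\ref{length} and noting that $d(w)\geq t-1$ already yields an induced $P_t$, which you also observe parenthetically.
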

\begin{proof}
	Since we start with $|S_1|=1$,  by applying Lemma~\ref{size} $ t-2 $ times, it follows that $|S_{t-1}|\leq (1+R(4,R(4,s)))^{t-2}$. Let $M_{s,t}=(1+R(4,R(4,s)))^{t-2}$, then the first claim holds.
	
	Suppose the second claim does not hold. From Lemma \ref{length}, it follows that $\{v : d(v) \leq t-1\} \subseteq S_{t-1} \cup N(S_{t-1})$. But if $w \in V$ satisfies $d(w) \geq t-1$, then a shortest $w$-$a$-path is an induced path of length at least $t$, a contradiction. Thus the second claim holds.
\end{proof}	

We are now ready to prove our main result, which we rephrase here:

\begin{theorem}
	Let $ M_{s,t}=(1+R(4,R(4,s)))^{t-2} $. There exists an algorithm with running time $O(|V(G)|^4+t|V(G)|^2+3^{M_{s,t}}(V(G)+E(G)))$ with the following specification. 
	\\
	\\
	{\bf Input:}  A $(P_t,SDK_s)$-free graph G and a $3$-list assignment $L$ for $G$.
	\\
	\\
	{\bf Output:}  A coloring of $(G,L)$, or a determination that $(G,L)$ is not colorable.
\end{theorem}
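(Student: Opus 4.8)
The plan is to reduce the general $(P_t, SDK_s)$-free list-3-coloring problem to the case of $K_4$-free graphs, and then exploit the small dominating set produced by the construction in Section~3 together with Corollary~\ref{Edwards2}.

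First I would handle the $K_4$-reduction. Observe that in any 3-coloring, the vertices of a $K_4$ cannot all receive distinct colors, so a graph containing $K_4$ is not 3-colorable; more usefully, list-3-colorability is a local obstruction, so we may check in $O(|V(G)|^4)$ time whether $G$ contains $K_4$ (by brute force over all $4$-tuples) and, if so, output ``not colorable''. After this step we may assume $G$ is $(P_t, SDK_s, K_4)$-free, which is exactly the hypothesis under which the construction of $S_{t-1}$ in Section~3 is defined. I would also reduce to the connected case: if $G$ is disconnected, we solve the problem separately on each component, taking time proportional to the sum over components, which is dominated by the bound for the whole graph (the only subtlety is that $|V(G)|^4$ and $3^{M_{s,t}}$ are superadditive, so running the per-component algorithm is fine).

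Second, with $G$ connected and $(P_t, SDK_s, K_4)$-free, I would compute $S_{t-1}$ using the procedure of Section~3; by the remark there this takes $O(t|V(G)|^2)$ time. By Lemma~\ref{S}(1), $|S_{t-1}| \le M_{s,t}$, a constant depending only on $s$ and $t$. By Lemma~\ref{S}(2), $W_{t-1} = V(G) \setminus (S_{t-1} \cup N(S_{t-1})) = \emptyset$, which says precisely that $S_{t-1}$ is a dominating set of $G$ (every vertex not in $S_{t-1}$ lies in $N(S_{t-1})$, hence has a neighbor in $S_{t-1}$). Now apply Corollary~\ref{Edwards2} with $D = S_{t-1}$: a coloring of $(G, L)$, or a determination that none exists, can be found in time $O(3^{|S_{t-1}|}(|V(G)| + |E(G)|)) = O(3^{M_{s,t}}(|V(G)| + |E(G)|))$.

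Finally I would assemble the running times: the $K_4$-check contributes $O(|V(G)|^4)$, the construction of $S_{t-1}$ contributes $O(t|V(G)|^2)$, and the Edwards-style dominating-set colouring contributes $O(3^{M_{s,t}}(|V(G)| + |E(G)|))$, matching the claimed bound $O(|V(G)|^4 + t|V(G)|^2 + 3^{M_{s,t}}(V(G) + E(G)))$. Correctness is immediate from the three reductions. I do not expect a serious obstacle here: all the structural work is already packaged in Lemmas~\ref{size}, \ref{length}, and~\ref{S}, and the only mild care needed is (a) noting that excluding $K_4$ is safe because $K_4$ is never list-3-colourable and hence can be deleted from consideration entirely, and (b) checking that the disconnected and $K_4$-containing cases do not blow up the running time, which they do not since the stated bound is superadditive in $|V(G)|$.
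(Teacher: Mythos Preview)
Your proposal is correct and follows essentially the same approach as the paper: reduce to connected, $K_4$-free graphs in $O(|V(G)|^4)$ time, build $S_{t-1}$ in $O(t|V(G)|^2)$ time, invoke Lemma~\ref{S} to see that $S_{t-1}$ is a dominating set of size at most $M_{s,t}$, and finish with Corollary~\ref{Edwards2}. The paper's proof is terser but structurally identical.
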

\begin{proof}
	We may assume that $G$ is connected, since otherwise we can run the algorithm for each component of $G$ independently. In time $O(|V(G)|^4)$ we can determine that either $(G,L)$ is not colorable, or $G$ is $K_4$-free. If $G$ is $K_4$-free, we can construct $S_{t-1}$ in $O(tn^2)$ time as stated above. Then by Lemma~\ref{S}, $S_{t-1}$ is a dominating set of $G$ and $|S_{t-1}|\leq M_{s,t}$. Now the theorem follows from Corollary~\ref{Edwards2}.
\end{proof}	


\begin{thebibliography}{50}
	\bibitem{c1} Bonomo, F., M. Chudnovsky, P. Maceli, O. Schaudt, M. Stein, and M. Zhong. \emph{Three-coloring and list three-coloring graphs without induced paths on seven vertices.}  Combinatorica 38(4) (2018), 779--801.
	       \bibitem{4p6}  Chudnovsky, M.,  S. Spirkl and
	M. Zhong, Four-coloring $ P_6 $-free graphs. In Proceedings of 30th Annual ACM-SIAM Symposium on Discrete Algorithms (SODA 2019), 1239-1256.
	\bibitem{edwards} Edwards, K. \emph{The complexity of colouring problems on dense graphs.} Theoretical Computer Science 43 (1986): 337--343.
	\bibitem{hoang} Ho\`ang, Ch\'inh T., Marcin Kami\'nski, Vadim Lozin, Joe Sawada, and Xiao Shu. \emph{Deciding $k$-colorability of $P_5$-free graphs in polynomial time.} Algorithmica 57, no. 1 (2010): 74--81.
	
	\bibitem{huang} Huang, Shenwei. \emph{Improved complexity results on $k$-coloring $P_t$-free graphs.} European Journal of Combinatorics 51 (2016): 336--346.
	
	\bibitem{gps} Golovach, Petr A., Dani\"el Paulusma, and Jian Song. \emph{Closing complexity gaps for coloring problems on H-free graphs.} Information and Computation 237 (2014): 204--214.
	
	\bibitem{Ramsey}  Ramsey, F.P. \emph{On a problem of formal logic.} Proc. London Math. Soc. 2 (1930), no. 30, 264-–286.
\end{thebibliography}
\end{document}